\newcommand{\ignore}[1]{}
\newcommand{\tr}{\triangle}
\newcommand{\cuboid}[1]{\mathrm{cuboid}\!\left(#1\right)}
\newcommand{\ind}[1]{\mathrm{ind}\!\left(#1\right)}
\newcommand{\1}{\mathbf{1}}
\newcommand{\0}{\mathbf{0}}
\newtheorem{theorem}{Theorem}
\newcounter{claim_nb}[theorem]
\newtheorem*{claim*}{Claim}
\newcounter{claim_nbs}[section]
\newcounter{subclaim_nb}[claim_nbs]
\title{Testing idealness in the filter oracle model}
\author{Ahmad Abdi \and
G\'erard Cornu\'ejols \and
Bertrand Guenin \and
Levent Tun\c{c}el
}
\begin{document}

\maketitle

\begin{abstract}
A \emph{filter oracle} for a clutter consists of a finite set $V$ along with an oracle which, given any set $X\subseteq V$, decides in unit time whether or not $X$ contains a member of the clutter. Let $\mathfrak{A}_{2n}$ be an algorithm that, given any clutter $\mathcal{C}$ over $2n$ elements via a filter oracle, decides whether or not $\mathcal{C}$ is ideal. We prove that in the worst case, $\mathfrak{A}_{2n}$ must make at least $2^n$ calls to the filter oracle. Our proof uses the theory of cuboids.
\end{abstract}

\paragraph{Background}
Let $V$ be a finite set, and $\mathcal{C}$ a family of subsets of $V$, called \emph{members}. $\mathcal{C}$ is a \emph{clutter} over \emph{ground set} $V$ if no member contains another one~\cite{Edmonds70}. $\mathcal{C}$ is \emph{ideal} if the set covering polyhedron $\left\{x\in \mathbb{R}^V:\sum_{u\in C}x_u\geq 1~\forall C\in \mathcal{C};x\geq \0\right\}$ is integral. The terminology was coined in \cite{Cornuejols94} but the notion goes back to the 1960s by Lehman
\cite{Lehman79} (it took some time for the manuscript to be put in print). 

An important question is the time complexity of detecting the property of idealness. Using basic polyhedral theory, one can show easily that testing idealness belongs to co-NP. In fact, it was shown in~\cite{Ding08} that testing idealness is co-NP-complete, and so testing idealness is NP-hard.

Many examples of clutters from Combinatorial Optimization, such as arborescences, cuts, $T$-joins, and dijoins, have exponentially many members (in the size of the ground set). For this reason, for some problems, it may be more appropriate to work in a model where $\mathcal{C}$ is inputted via an oracle. More precisely, a \emph{filter oracle} for a clutter $\mathcal{C}$ consists of $V$ along with an oracle which, given any set $X\subseteq V$, decides in unit time whether or not $X$ contains a member. 

In the filter oracle model, it is no longer clear that testing idealness belongs to co-NP. Using a seminal theorem of Lehman on \emph{minimally non-ideal} clutters~\cite{Lehman90}, Seymour showed that testing idealness indeed belongs to co-NP~\cite{Seymour90}. In this brief note, we prove that in the filter oracle model, testing idealness cannot be done in polynomial time (regardless of the ``P versus NP" question). 

Our result is proved by using the concept of \emph{cuboids}, initiated in \cite{Abdi-mnp} and developed in ~\cite{Abdi-cuboids}, which allows us to get an understanding of the ``local geometry" of ideal clutters. 

A \emph{cuboid} is a clutter $\mathcal{C}$ whose ground set can be partitioned into pairs $\{u_i,v_i\},i\in [n]$ such that $|\{u_i,v_i\}\cap C|=1$ for all $i\in [n]$ and $C\in \mathcal{C}$. $\mathcal{C}$ can be represented as a subset of $\{0,1\}^n$. More precisely, for each $C\in \mathcal{C}$, let $p(C)$ be the point in $\{0,1\}^n$ such that $p(C)_i=0$ iff $C\cap \{u_i,v_i\}=\{u_i\}$. Let $S:=\{p(C):C\in \mathcal{C}\}$. 
We call $\mathcal{C}$ the \emph{cuboid of $S$}, denote by $\cuboid{S}:=\mathcal{C}$ and by $C(p)$ the member of $\mathcal{C}$ corresponding to $p\in \{0,1\}^n$. Note that the operator $\cuboid{\cdot}$ takes any subset of $\{0,1\}^n$ to a cuboid. $S$ is \emph{cube-ideal} if $\cuboid{S}$ is an ideal clutter. It is known that $S$ is cube-ideal iff the convex hull of $S$ can be described by $\0\leq x\leq \1$ and inequalities of the form $\sum_{i\in I}x_i+\sum_{j\in J}(1-x_j)\geq 1$ for disjoint $I,J\subseteq [n]$~\cite{Abdi-mnp,Abdi-cuboids}. Thus, the set $\{0,1\}^n$ is cube-ideal. Moreover, if $S$ is cube-ideal, then so is every restriction of it obtained by fixing coordinates to $0$ or $1$ (and then dropping the coordinates).

Let $p\in \{0,1\}^n$. The set $S\tr p$ is defined as $\{x\tr p:x\in S\}$, where the second $\tr$ denotes coordinate-wise sum mod $2$; we call $S\tr p$ the \emph{twisting of $S$ with respect to $p$}. It can be readily seen that twisting preserves cube-idealness. The \emph{induced clutter of $S$ with respect to $p$}, denoted by $\ind{S\tr p}$, is the clutter over ground set $[n]$ whose members are the inclusionwise minimal sets in $\{C\subseteq [n]:\chi_C\in S\tr p\}$. In particular, if $p\in S$, then $\ind{S\tr p}=\{\emptyset\}$. A key insight for this note is that $S$ is cube-ideal iff the induced clutter of $S$ with respect to every point in $\{0,1\}^n$ is ideal~\cite{Abdi-cuboids}. Consequently, if for example $S$ excludes a single point $p$ of $\{0,1\}^n$, then $S$ is cube-ideal, because $\ind{S\tr p}=\{\{1\},\{2\},\ldots,\{n\}\}$ is an ideal clutter.

\paragraph{The result} We are almost ready to prove the main result of this note. Let $n\geq 1$ be an integer, and let $G_n$ denote the skeleton graph of the unit hypercube $[0,1]^n$. Given $S\subseteq \{0,1\}^n$, if $G_n[\{0,1\}^n-S]$ has maximum degree at most $2$, then $S$ is cube-ideal. This result was first proved in~\cite{Cornuejols16}, and further studied in \cite{Abdi-resistant}. It can also be readily shown using the characterization of cube-idealness in terms of induced clutters. The result, however, does not extend from $2$ to $3$. Let $S_3:=\{e_1+e_2, e_2+e_3,e_1+e_3,e_1+e_2+e_3\}\subseteq \{0,1\}^3$. Then $S_3$ is not cube-ideal because its convex hull has a facet-defining inequality of the form $x_1+x_2+x_3\geq 2$. Moreover, in $G_3[\{0,1\}^3-S_3]$, the vertex $\0$ has $3$ neighbours $e_1,e_2,e_3$. 

\begin{theorem}\label{filter-oracle-exp}
Let $\mathfrak{A}_{2n}$ be an algorithm that, given any clutter $\mathcal{C}$ over $2n$ elements via a filter oracle, decides whether or not $\mathcal{C}$ is ideal. Then in the worst case, $\mathfrak{A}_{2n}$ must make at least $2^n$ calls to the filter oracle.
\end{theorem}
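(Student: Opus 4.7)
My plan is an adversary argument built on the cuboid framework. For each $p \in \{0,1\}^n$ and each $I = \{i_1, i_2, i_3\} \in \binom{[n]}{3}$, I will associate two candidate subsets of $\{0,1\}^n$: $S_{p, I} := \{0,1\}^n \setminus \{p+e_{i_1}, p+e_{i_2}, p+e_{i_3}\}$ and $S'_{p, I} := S_{p, I} \setminus \{p\}$. Since $\{0,1\}^n \setminus S_{p, I}$ consists of three pairwise non-adjacent points in $G_n$, the degree-based criterion from the excerpt shows that $S_{p, I}$ is cube-ideal, so $\cuboid{S_{p, I}}$ is ideal. Meanwhile, $\{0,1\}^n \setminus S'_{p, I}$ is a $4$-point star with center $p$ whose restriction to the coordinates in $I$ (after a twist) is $\{0,1\}^3 \setminus \{\0, e_1, e_2, e_3\} = S_3$, which the excerpt identifies as not cube-ideal. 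Since cube-idealness is preserved under restrictions, $\cuboid{S'_{p, I}}$ is not ideal.

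The two cuboids $\cuboid{S_{p, I}}$ and $\cuboid{S'_{p, I}}$ produce identical oracle answers on every query $X \subseteq V$ except when $T(X)$ is one of the four non-empty subcubes of the star $\{0,1\}^n \setminus S'_{p, I}$ that contain $p$ --- namely $\{p\}$ and $\{p, p+e_{i_j}\}$ for $j \in \{1,2,3\}$. This is because answers diverge exactly when $T(X)$ is contained in the star and intersects $S_{p, I}$ only in $\{p\}$; also, no $2$-face (let alone larger subcube) fits inside a $4$-point star of degree sequence $(3,1,1,1)$. Hence the algorithm can distinguish the two cuboids only by issuing a query whose $T(X)$ is one of these four ``distinguishing subcubes''.

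The adversary will commit to a pair $(p^*, I^*)$ not distinguished by the algorithm's queries, and then deliver the instance of idealness opposite to the algorithm's output. It thus suffices to prove that fewer than $2^n$ queries must leave some pair undistinguished. I count incidences: call $p$ ``singleton-queried'' if the algorithm asks some $X$ with $T(X) = \{p\}$, and let $t$ be the number of such $p$; let $L_p := \{l \in [n] : \text{the algorithm queries some } X \text{ with } T(X) = \{p, p+e_l\}\}$. If $p$ is not singleton-queried, then distinguishing $(p, I)$ for every $I \in \binom{[n]}{3}$ forces $L_p \cap I \neq \emptyset$ for every $3$-subset of $[n]$, whence $|L_p| \geq n - 2$ by pigeonhole. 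Since each edge query contributes to at most two $L_p$'s, the number of edge queries $e$ satisfies $2e \geq (2^n - t)(n-2)$, yielding a total of $k \geq t + (2^n - t)(n-2)/2$. For $n \geq 4$ this expression is minimized at $t = 2^n$ and equals $2^n$, giving the bound. Queries with $|T(X)| \geq 3$ contribute nothing to the counting since no subcube of dimension $\geq 2$ sits inside a $4$-point star.

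The main obstacle is the fine-grained counting in the last paragraph: one must verify that no query-type outside singletons and edges offers an unexpected shortcut (handled by the degree-sequence remark about $4$-point stars), and carefully execute the pigeonhole step $|L_p| \geq n - 2$. The edge case $n = 3$, where $\binom{[n]}{3}$ is a singleton and the pigeonhole bound degenerates, requires a separate treatment, most naturally by enriching the witness family with configurations in which the excluded set contains the entire neighborhood of a single vertex.
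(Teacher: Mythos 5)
Your overall strategy matches the paper's---exhibit claws $\{0,1\}^n\setminus\{p,p\tr e_{i_1},p\tr e_{i_2},p\tr e_{i_3}\}$ as the non-ideal witnesses, observe that the only subcubes of a claw are its vertices and its three edges, and count queries. However, you deliberately weaken the argument by comparing $S'_{p,I}$ against only a single ideal superset (adding back the center $p$), whereas the paper compares against \emph{all} single-point supersets $S'_{p,I}\cup\{q\}$, $q$ ranging over all four claw points. This is not a cosmetic difference: the paper's version yields the constraint ``for every vertex $q$ and every neighbour $p$, query $\{q\}$ or $\{q,p\}$,'' whose counting resolves cleanly to $2^n$ for every $n\ge 3$, while your weaker constraint only produces a transversal condition on the sets $L_p$ and gives $t+(2^n-t)(n-2)/2$, which degenerates below $2^n$ when $n=3$ (indeed at $t=0$ it gives $4<8$). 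You flag this and sketch a patch, but the patch you gesture at---``enriching the witness family''---is precisely what the paper's full argument does by testing every claw vertex $q$, not just the center. In that sense your ``separate treatment for $n=3$'' would naturally collapse back into the paper's proof, and it would have been cleaner (and cost nothing) to run the stronger constraint from the start. A secondary, more pedantic point: phrasing the adversary as ``commit to a pair $(p^*,I^*)$ not distinguished by the algorithm's queries'' elides that the adversary's on-line answers must be consistent with the eventually-revealed instance; the standard fix is to have the adversary answer as $\cuboid{\{0,1\}^n}$ and then, if the output is ``ideal,'' swap in a claw instance whose four vertices were never isolated by any query. The paper is equally terse on this point, so it is not a gap relative to the paper, but if you wanted a fully airtight write-up you would want to make that step explicit.
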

\begin{proof}
For all $p\in \{0,1\}^n$ and distinct $i,j,k\in [n]$, let $S_{(p:i,j,k)} := \{0,1\}^n - \{p,p\tr e_i,p\tr e_j,p\tr e_k\}.$ Then $S_{(p:i,j,k)}$ is not cube-ideal as it has an $S_3$ restriction, while every proper superset $S'$ of $S_{(p:i,j,k)}$ is cube-ideal as $G_n[\{0,1\}^n-S']$ has degree at most $2$. In particular, $\cuboid{S_{(p:i,j,k)}}$ is a non-ideal clutter, while $\cuboid{S'}$ is ideal for every $S'\supsetneq S$. Thus, $\mathfrak{A}_{2n}$ must distinguish between $\cuboid{S_{(p:i,j,k)}}$ and $\cuboid{S'}$ for every $S'\supsetneq S$. Consequently, for every point $q\in \{p,p\tr e_i,p\tr e_j,p\tr e_k\}$, the algorithm must query the set $C(q)$ or a superset of it. In fact, for $q\in \{p,p\tr e_i,p\tr e_j,p\tr e_k\}-\{p\}$, every neighbour of $q$ in $G_n$ except for $p$ belongs to both $S_{(p:i,j,k)}$ and $S',S'\supsetneq S$, so the algorithm must query either $C(q)$ or $C(q)\cup C(p)$ (note that $|C(q)\cup C(p)|=|C(q)|+1$).

By applying the argument above to every $p\in \{0,1\}^n$ and distinct $i,j,k\in [n]$, we conclude the following: For every $q\in \{0,1\}^n$ and every neighbour of it $p\in \{0,1\}^n$ in $G_n$, $\mathfrak{A}_{2n}$ must query at least one of $C(q),C(q)\cup C(p)$. It can be readily checked that $\mathfrak{A}_{2n}$ must query at least $2^{n}$ sets.
\end{proof}

Let $\mathcal{C}$ be a clutter over ground set $V$. Let $I,J$ be disjoint subsets of $V$. The \emph{minor of $\mathcal{C}$ obtained after deleting $I$ and contracting $J$}, denoted $\mathcal{C}\setminus I/J$, is the clutter over ground set $V-(I\cup J)$ whose members are the inclusionwise minimal sets in $\{C-J:C\in \mathcal{C},C\cap I=\emptyset\}$. Given a filter oracle for $\mathcal{C}$, we also have one for every minor $\mathcal{C}\setminus I/J$~\cite{Seymour90}.

Being ideal is closed under taking minor operations~\cite{Seymour77}. Two clutters are \emph{isomorphic} if one can be obtained from the other by relabeling its ground set. Denote by $\Delta_3$ any clutter isomorphic to $\{\{1,2\},\{2,3\},\{3,1\}\}$. It can be readily checked that $\Delta_3$ is the only non-ideal clutter over a ground set of size at most three. In particular, if a clutter has a $\Delta_3$ minor, then it is non-ideal. 

Let $S\subseteq \{0,1\}^n$. It can be readily seen that every induced clutter of $S$ is a (contraction) minor of $\cuboid{S}$. Thus, since $\ind{S_3}=\{\{1,2\},\{2,3\},\{1,3\}\}$, $\cuboid{S_3}$ has a $\Delta_3$ minor, proving once again that $S_3$ is not cube-ideal. It can also be readily seen that if $R$ is a restriction of $S$, then $\cuboid{R}$ is a minor of $\cuboid{S}$. Consequently, in the proof of \Cref{filter-oracle-exp}, it can be readily seen that $\cuboid{S_{(p:i,j,k)}}$ has a $\Delta_3$ minor, while $\cuboid{S'}$ is ideal and therefore has no $\Delta_3$ minor for every $S'\supsetneq S$. Thus, the proof also implies the following.

\begin{theorem}
Let $\mathfrak{D}_{2n}$ be an algorithm that, given any clutter $\mathcal{C}$ over $2n$ elements via a filter oracle, decides whether or not $\mathcal{C}$ has a $\Delta_3$ minor. Then in the worst case, $\mathfrak{D}_{2n}$ must make at least $2^n$ calls to the filter oracle.\qed
\end{theorem}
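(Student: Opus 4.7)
My plan is to reuse the construction and adversary argument from the proof of Theorem~\ref{filter-oracle-exp} essentially verbatim, replacing the dichotomy ``ideal versus non-ideal'' with ``has a $\Delta_3$ minor versus does not.'' The two hypothesis clutters from that proof already straddle this finer dichotomy, so only a small bookkeeping step is needed.

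First, I would set up the same two input instances. For every $p \in \{0,1\}^n$ and distinct $i,j,k \in [n]$, consider $\cuboid{S_{(p:i,j,k)}}$ and $\cuboid{S'}$ for each $S' \supsetneq S_{(p:i,j,k)}$. I would verify that $\cuboid{S_{(p:i,j,k)}}$ has a $\Delta_3$ minor: up to twisting by $p$ (which preserves cuboid structure), the coordinates $i,j,k$ exhibit an $S_3$ restriction of $S_{(p:i,j,k)}$; since restrictions of $S$ correspond to minors of $\cuboid{S}$ and since $\ind{S_3}=\Delta_3$ is itself a minor of $\cuboid{S_3}$, the clutter $\cuboid{S_{(p:i,j,k)}}$ indeed has a $\Delta_3$ minor. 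Conversely, the proof of Theorem~\ref{filter-oracle-exp} already shows $\cuboid{S'}$ is ideal for every $S'\supsetneq S_{(p:i,j,k)}$; since idealness is closed under taking minors and $\Delta_3$ is not ideal, no such $\cuboid{S'}$ has a $\Delta_3$ minor.

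Second, I would repeat the adversary argument word for word. The algorithm $\mathfrak{D}_{2n}$ must distinguish $\cuboid{S_{(p:i,j,k)}}$ from each $\cuboid{S'}$ with $S'\supsetneq S_{(p:i,j,k)}$; applying the same case analysis to each of the four excluded vertices $\{p,p\tr e_i,p\tr e_j,p\tr e_k\}$ shows that for every such $q$, the algorithm must query either $C(q)$ or $C(q)\cup C(p)$. Varying $p,i,j,k$ gives the same local constraint as before: for every $q\in\{0,1\}^n$ and every neighbour $p$ of $q$ in $G_n$, $\mathfrak{D}_{2n}$ must query at least one of $C(q), C(q)\cup C(p)$. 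The identical counting argument then yields the $2^n$ lower bound.

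The only real content beyond the proof of Theorem~\ref{filter-oracle-exp} is the observation that the ideal clutters $\cuboid{S'}$ cannot have a $\Delta_3$ minor, and this is immediate from minor-closure of idealness plus non-idealness of $\Delta_3$ (both recalled in the paragraph preceding the theorem). So I do not expect any genuine obstacle; the whole argument is essentially a relabelling of the previous one.
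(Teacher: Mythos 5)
Your proof is correct and is essentially identical to the paper's: the paper likewise observes that $\cuboid{S_{(p:i,j,k)}}$ has a $\Delta_3$ minor via its $S_3$ restriction (using that restrictions and induced clutters yield minors of the cuboid, and $\ind{S_3}=\Delta_3$), that each ideal $\cuboid{S'}$ with $S'\supsetneq S_{(p:i,j,k)}$ is $\Delta_3$-minor-free by minor-closure of idealness, and then reuses the adversary argument from Theorem~\ref{filter-oracle-exp} unchanged. Your extra care about twisting by $p$ before taking the restriction is a nice precision the paper glosses over.
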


%
%

{\small \bibliographystyle{abbrv}\bibliography{references}}

\end{document}